\providecommand{\U}[1]{\protect\rule{.1in}{.1in}}
\newtheorem{theorem}{Theorem}[section]
\newtheorem{corollary}[theorem]{Corollary}
\newtheorem{lemma}[theorem]{Lemma}
\newtheorem{proposition}[theorem]{Proposition}
\newtheorem{definition}{Definition}[section]
\newtheorem{remark}[theorem]{Remark}
\newtheorem{example}[theorem]{Example}
\theoremstyle{definition}
\theoremstyle{remark}
\numberwithin{equation}{section}
\let\pdfoutput=\undefined\fi
\begin{document}
\pagestyle{myheadings}

\begin{center}
{\huge \textbf{A system of dual quaternion matrix equations with its applications}}	
\footnote{This research is supported by the National Natural
Science Foundation of China [grant number 12371023].

\par
{}* Corresponding author.
\par  Email address: wqw@shu.edu.cn (Q.W. Wang); xlm1817@163.com (L.M. Xie). 
}

\bigskip

{ \textbf{Lv-Ming Xie$^{a}$, Qing-Wen Wang$^{a,b,*}$}}

{\small
\vspace{0.25cm}

$a$. Department of Mathematics and Newtouch Center for Mathematics, Shanghai University, Shanghai 200444, People's Republic of China\\
$b$. Collaborative Innovation Center for the Marine Artificial Intelligence, Shanghai University, Shanghai 200444, People's Republic of China

}

\end{center}

\vspace{1cm}

\begin{quotation}
\noindent\textbf{Abstract:}
 We employ the M-P inverses and ranks of quaternion matrices to establish the necessary and sufficient conditions for solving a system of the dual quaternion matrix equations $(AX, XC) = (B, D)$, along with providing an expression for its general solution. Serving as an application, we investigate the solutions to the dual quaternion matrix equations $AX = B$ and $XC=D$, including $\eta$-Hermitian solutions. Lastly, we design a numerical example to validate the main research findings of this paper.

\vspace{3mm}

\noindent\textbf{Keywords:} Dual quaternion; M-P inverse; Rank; General solution; $\eta$-Hermitian solution
\newline
\noindent\textbf{2010 AMS Subject Classifications:\ }{\small 15A03; 15A09; 15A24; 15B33; 15B57}\newline
\end{quotation}

\section{\textbf{Introduction}}

\par\setlength{\parindent}{2em} Hamilton's discovery of quaternions \cite{Hamilton} opened the door to their widespread applications, spanning various domains such as mechanics, quantum physics, signal processing, and color image processing. Subsequent to this, in 1849, James Cockle introduced the concept of split quaternions, which attracted the attention of scholars due to its relevance to solving matrix equations in control theory. Such as Liu et al. and Yuan et al. have conducted work on solving split quaternion matrix equations, as evidenced by references \cite{Liu1,Liu2,Liu3,Yuan}. Owing to the non-commutative nature of quaternions and split quaternions in multiplication, Segre introduced the concept of commutative quaternions. Following that, researchers Xie et al. \cite{Xie}, Ren et al. \cite{Ren}, and Chen et al. \cite{Chen} have explored the solutions for matrix equation systems involving commutative quaternion matrices. In 1873, Clifford \cite{Clifford} introduced the concepts of dual numbers and dual quaternions. Since then, dual quaternions have discovered extensive utility in fields such as robotics, 3D motion modeling, and computer graphics, etc(see \cite{ChengBJ,DaniilidisBJ,KenrightBJ,WangBJ,WangBJ1}). They have become a fundamental component in solving significant engineering challenges, including the formation control of unmanned aerial vehicles and small satellites. This has captured the interest of numerous scholars.
\par\setlength{\parindent}{2em}In 2022, Ling et al. \cite{Ling} conducted a study on the singular values and low-rank approximations of dual quaternion matrices. In \cite{Zhuang}, Zhuang et al. framed the hand-eye calibration model issue as a solving problem of the matrix equation $AX = YB$. Subsequently, Li et al. \cite{Li1} transformed the matrix equation $AX = ZB$ into a dual quaternion equation $\hat{q}_A \hat{q}_X=\hat{q}_Z \hat{q}_B$ by using dual quaternions. In \cite{Chen1}, Chen et al. have  transformed the hand-eye calibration problems $A^{(i)}X=XB^{(i)}$ and $A^{(i)}X = ZB^{(i)}$ into dual quaternion optimization problems  $\min\|{\textbf{a}x-x\textbf{b}}\|_2$ and  $\min\|{\textbf{a}x-z\textbf{b}}\|_2$, respectively. In the process of solving the hand-eye calibration problem, both references \cite{Li1} and \cite{Chen1} have employed singular value decomposition to provide numerical solutions. On the one hand, there has been limited information available regarding the use of matrix M-P inverse and rank as tools to offer exact solutions for dual quaternion matrix equation systems. On the other hand, in the context of the system of classical matrix equations \begin{equation}\label{eq1.1}
	\left\{\begin{array}{c}
		A X=B, \\
		X C=D,
	\end{array}\right.
\end{equation} a multitude of papers have put forth a range of solutions, including Hermitian solutions \cite{Khatri}, the minimum possible rank of solutions \cite{Mitra}, $(R,S)$-conjugate solutions \cite{Chang2010}, reducible solutions \cite{Nie}, $(P,Q)$-(skew)symmetric extremal rank solutions \cite{Zhang2011}, and so on. To enrich the theory and applications of the system of matrix equations \eqref{eq1.1}, we investigate its solutions with respect to dual quaternions in this paper.  
\par\setlength{\parindent}{2em}The structure of this paper unfolds as follows. In Section 2, we revisit the definitions of dual numbers and dual quaternions, provide the definition of $\eta$-Hermitian dual quaternion matrix, and present a crucial lemma and theorem. We devote Section 3 to establish the necessary and sufficient conditions for the solvability of the system of dual quaternion matrix equations \eqref{eq1.1}, and derive an expression for the general solution when the system \eqref{eq1.1} is consistent. Within the scope of the application, we delve into the solutions and $\eta$-Hermitian solutions of the dual quaternion matrix equations $AX=B$ and $XC=D$. In Section 4, we design a numerical example to validate the key results of this paper.  Finally, we summarize the main content of this paper in Section 5.
\par\setlength{\parindent}{2em}Presently, we offer a succinct overview of the notation and properties employed throughout this paper. Let $\mathbb{R},\mathbb{D},\mathbb{H},\mathbb{DQ}$ be the real number field, the complex number field, the dual numbers, quaternions, dual quaternions, respectively. We denote $\mathbb{DQ}^{m \times n}$ ( or $\left.\mathbb{H}^{m \times n}\right)$ as the set of
all $m \times n$ matrices over $\mathbb{DQ}$ $(\text{ or } \mathbb{H})$. For $A\in\mathbb{H}^{m\times n}$, the symbol $r(A)$ represents the rank of $A$, and $A^*$ stands for the conjugate transpose of $A$. We denote the M-P inverse of $A \in \mathbb{H}^{m \times n}$ as $A^\dagger$, and it fulfills the following equations:
$$
AXA=A, \quad X A X=X, \quad(A X)^{*}=A X, \quad(X A)^{*}=X A.
$$
Moreover, we use the notations $L_A$ and $R_A$ to represent the projectors $I - A^\dagger A$ and $I - A A^\dagger$, respectively. It is evident that
$$
L_A=\left(L_A\right)^*=\left(L_A\right)^2=L_A^{\dagger}, R_A=\left(R_A\right)^2=\left(R_A\right)^*=R_A^{\dagger},
\left(L_A\right)^{\eta^*}=R_{A^{\eta^*}},
\left(R_A\right)^{\eta^*}=L_{A^{\eta^*}}.
$$

\section{\textbf{ Preliminary}}

\par\setlength{\parindent}{2em}In this section, we define dual numbers, dual quaternions, and dual quaternion matrices, and describe key operations related to them. We also present an important theorem and lemma that play a fundamental role in deriving the main outcome.

\subsection{Definition of dual numbers and dual quaternions}
\quad\\
\par\setlength{\parindent}{2em}The set of dual numbers is denoted by {\cite{Qi2022}}
\begin{equation}\label{eq2.1}
\mathbb{D}=\{a=a_{0}+a_{1}\epsilon:a_{0},a_{1}\in \mathbb{R} \text{ and }  \epsilon^2=0\},
\end{equation}
where $\epsilon$ is the infinitesimal unit. We refer to $a_0$ as the real part or standard part of $a$, and $a_1$ as the dual part or infinitesimal part of $a$. The infinitesimal unit $\epsilon$ commutes in multiplication with real numbers, complex numbers, and quaternions. Assume that $a=a_{0}+a_{1}\epsilon, b=b_0+b_1\epsilon\in\mathbb{D}, \gamma\in\mathbb{R}$, then we have
$$
\begin{aligned}
a+b&=(a_{0}+b_{0})+(a_{1}+b_{1})\epsilon,\\
ab&=ba=a_0b_0+(a_0b_1+a_1b_0)\epsilon,\\
\gamma a&=\gamma (a_{0}+a_{1}\epsilon)=\gamma a_{0}+\gamma a_{1}\epsilon.
\end{aligned}
$$
\par\setlength{\parindent}{2em}Now, we provide the definition of a dual quaternion. Denotes the collection of dual quaternions as
\begin{equation}\label{eq2.2}
	\mathbb{DQ}=\{c=c_{0}+c_{1}\epsilon:c_{0},c_{1}\in \mathbb{H} \text{ and }  \epsilon^2=0\},
\end{equation}
where $c_{0}, c_{1} $ are the standard part and the infinitesimal part of $c$, respectively. 
\begin{remark}
	Based on the definition of dual quaternions, just as quaternions are non-commutative under multiplication, dual quaternions similarly do not commute under multiplication.
\end{remark}
\begin{definition}\emph{\cite{Qi2022}}\label{de1.1}
	Let $d=d_{0}+d_{1}\epsilon\in\mathbb{DQ}$. Then the conjugate of $d$ is defined as follows:
	$$
	d^*=d_{0}^*+d_{1}^* \epsilon .
	$$
\end{definition}
In a similar manner, we can provide the definition of dual quaternion matrix along with several relevant properties.
\subsection{The definition of dual quaternion matrix}
\quad\\
\par\setlength{\parindent}{2em} A dual quaternion matrix is denoted by $A=A_0+A_1\epsilon\in\mathbb{DQ}^{m\times n}$, where $A_0,A_1\in\mathbb{H}^{m\times n}$. For $B=B_0+B_1\epsilon\in\mathbb{DQ}^{m\times n}$, if $A_0=B_0 \text{ and }A_1=B_1$ are obeyed, then $A=B$. The conjugate transpose of $A$ is designated as $A^*=A_0^*+A_1^*\epsilon$. Should $A^*=A$ and $A$ is a square dual quaternion matrix, it qualifies as a dual quaternion Hermitian matrix, with both its real part and dual part being quaternion Hermitian matrices.
\begin{definition}\label{de2.2}
	If $A=A_0+A_1\epsilon\in\mathbb{DQ}^{n\times n}$, fulfills condition 
	$$
	A=A^{\eta^*}, \ A^{\eta^*}:=-\eta A^*\eta=-\eta A_0^*\eta+(-\eta A_1^*\eta)\epsilon=A_0^{\eta^*}+A_1^{\eta^*}\epsilon, 
	$$
 where $\eta\in\{i,j,k\}$, then the dual quaternion matrix $A$ is termed an $\eta$-Hermitian matrix.
\end{definition}
\begin{proposition}\label{pro1.1}
	Let $A=A_0+A_1\epsilon,B=B_0+B_1\epsilon\in\mathbb{DQ}^{n\times n}$. Then
	\begin{enumerate}
		\item $(A+B)^{\eta^*}=A^{\eta^*}+B^{\eta^*}$;
		\item $(AB)^{\eta^*}=B^{\eta^*}A^{\eta^*}$;
		\item $(A^{\eta^*})^{\eta^*}=A$.
	\end{enumerate}
\end{proposition}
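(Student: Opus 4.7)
The plan is to prove all three parts by direct computation from the definition $A^{\eta^*} = -\eta A^* \eta$, relying only on (i) the standard linearity and reversal properties of the conjugate transpose on dual quaternion matrices, namely $(A+B)^* = A^* + B^*$ and $(AB)^* = B^* A^*$, (ii) the behavior of conjugate transpose under scalar multiplication $(\alpha A)^* = A^* \bar{\alpha}$ for a quaternion scalar $\alpha$, and (iii) the two arithmetic facts about $\eta \in \{i,j,k\}$ that will do all the real work, namely $\eta^2 = -1$ and $\eta^* = -\eta$. Since the $\eta^*$-operation on a dual quaternion matrix splits cleanly into its real and dual parts via the formula displayed in Definition~\ref{de2.2}, every identity reduces to the corresponding identity for ordinary quaternion matrices, and I can argue uniformly without separating the standard and infinitesimal components.

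For part (1), I would simply write $(A+B)^{\eta^*} = -\eta (A+B)^* \eta = -\eta A^* \eta - \eta B^* \eta$ and identify the two summands as $A^{\eta^*}$ and $B^{\eta^*}$. For part (2), starting from $(AB)^{\eta^*} = -\eta (AB)^* \eta = -\eta B^* A^* \eta$, I would insert a factor $\eta^2 = -1$ in the middle by writing $-\eta B^* A^* \eta = -\eta B^* (-\eta^2) A^* \eta = (-\eta B^* \eta)(-\eta A^* \eta) = B^{\eta^*} A^{\eta^*}$, which is exactly the reversal claimed. For part (3), I would first compute $(A^{\eta^*})^* = (-\eta A^* \eta)^* = \eta^* (A^*)^* \eta^* (-1) = -(-\eta)A(-\eta) = -\eta A \eta$, using $\eta^* = -\eta$ and the reversal rule for $*$; then $(A^{\eta^*})^{\eta^*} = -\eta (A^{\eta^*})^* \eta = -\eta(-\eta A \eta)\eta = \eta^2 A \eta^2 = (-1)A(-1) = A$.

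There is no real obstacle here; the only point that requires a bit of care is the bookkeeping in part (3), where one must correctly apply $(\alpha M \beta)^* = \beta^* M^* \alpha^*$ for quaternion scalars $\alpha,\beta$ and remember that for pure imaginary units $\eta^* = -\eta$ so that the two sign flips from the outer definition, the inner conjugation, and the relations $\eta^2 = -1$ all cancel. Once this algebra of signs is tracked once, parts (1) and (2) are essentially immediate.
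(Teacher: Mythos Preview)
Your proof is correct and follows essentially the same computation as the paper's: linearity for (1), the insertion $-\eta B^{*}A^{*}\eta=(-\eta B^{*}\eta)(-\eta A^{*}\eta)$ for (2), and the identities $\eta^{*}=-\eta$, $\eta^{2}=-1$ for (3). The only cosmetic difference is that the paper expands each dual quaternion matrix into its standard and infinitesimal parts $A_{0}+A_{1}\epsilon$ at every step, whereas you argue uniformly at the level of $\mathbb{DQ}^{n\times n}$ by invoking $(A+B)^{*}=A^{*}+B^{*}$ and $(AB)^{*}=B^{*}A^{*}$ directly.
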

\begin{proof}
	For (1), we have 
	$$
	\begin{aligned}
	 (A+B)^{\eta^*}&=-\eta(A+B)^*\eta=-\eta\left[(A_0+B_0)+(A_1+B_1)\epsilon\right]^*\eta\\
	 &=-\eta[(A_0+B_0)^*+(A_1+B_1)^*\epsilon]\eta\\
	 &=-\eta[A_0^*+B_0^*+A_1^*\epsilon+B_1^*\epsilon]\eta\\
	 &=-\eta(A_0^*+A_1^*\epsilon)\eta-\eta(B_0^*++B_1^*\epsilon)\eta=A^{\eta^*}+B^{\eta^*}.
	\end{aligned}
	$$
In relation to claim (2), we discover that $$
\begin{aligned}
	(AB)^{\eta^*}&=-\eta(AB)^*\eta=-\eta[(A_0B_0)^*+(A_0B_1+A_1B_0)^*\epsilon]\eta\\
	&=-\eta[(B_0^*A_0^*)+(B_1^*A_0^*+B_0^*A_1^*)\epsilon]\eta=-\eta(B^*A^*)\eta\\
	&=-\eta B^*\eta (-\eta) A^*\eta=B^{\eta^*}A^{\eta^*}.
\end{aligned}
$$ 
The answer to (3) is
	$$
	\begin{aligned}
		(A^{\eta^*})^{\eta^*}&=-\eta(A^{\eta^*})^*\eta=-\eta[-\eta A_0^*\eta+(-\eta)A_1^*\eta\epsilon]^*\eta\\
		&=-\eta(A_0^{\eta^*}+A_1^{\eta^*}\epsilon)^*\eta=-\eta(A_0^{\eta^*})^*\eta+(-\eta)(A_1^{\eta^*})^*\eta\epsilon\\
		&=(A_0^{\eta^*})^{\eta^*}+(A_1^{\eta^*})^{\eta^*}\epsilon=A.
	\end{aligned}
	$$
\end{proof}
\subsection{An important lemma and theorem}
\quad\\
\par\setlength{\parindent}{2em}To solve the system of dual quaternion matrix equations \eqref{eq1.1}, we begin by presenting a lemma and theorem related to quaternion matrix equation systems.

\begin{lemma}\emph{\cite{He2015}}\label{lem2.2}
	Let $A_i,B_i$  and $C_i(i=2,3)$ be given over $\mathbb{H}$. Set
	$$
	\begin{array}{l}
		A_{00}=A_3L_{A_2}, B_{00}=R_{B_2}B_3,C_{00}=C_3-A_3A_2^{\dagger}C_2B_2^{\dagger}B_3,D_{00}=R_{A_{00}}A_3,\\
		\Phi=A_2^{\dagger}C_2B_2^{\dagger}+L_{A_2}A_{00}^{\dagger}C_{00}B_3^{\dagger}-L_{A_2}A_{00}^{\dagger}A_3 D_{00}^{\dagger}R_{A_{00}}C_{00}B_3^{\dagger}+D_{00}^{\dagger}R_{A_{00}}C_{00}B_{00}^{\dagger}R_{B_2}.
	\end{array}
	$$
	Then the system of matrix equations 
	\begin{equation}\label{eq2.5}
		\left\{\begin{array}{c}
			A_2YB_2=C_2,\\A_3YB_3=C_3
		\end{array}\right.
	\end{equation}	 
	is solvable if and only if 
	$$
	R_{A_2}C_2 =0, C_2L_{B_2}=0,R_{A_3}C_3 =0,C_3L_{B_3}=0, R_{A_{00}}C_{00}L_{B_{00}}=0.
	$$
	In this case, the general solution can be expressed as
	$$
	Y=\Phi +L_{A_2}L_{A_{00}}W_1+W_2R_{B_{00}}R_{B_2}+L_{A_2}W_3R_{B_3}+L_{A_3}W_4R_{B_2},
	$$
	where $W_i(i=\overline{1,4})$ are arbitrary matrices over $\mathbb{H}$ with appropriate size.
\end{lemma}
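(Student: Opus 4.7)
The plan is to reduce the system \eqref{eq2.5} to a sequence of single matrix equations of the form $AXB=C$, whose solvability theory over $\mathbb{H}$ is classical. First I would treat the equation $A_2YB_2=C_2$ on its own. By the quaternion version of Penrose's theorem it is consistent iff $R_{A_2}C_2=0$ and $C_2L_{B_2}=0$, and in that case every solution can be written
$$Y=A_2^{\dagger}C_2B_2^{\dagger}+L_{A_2}U+VR_{B_2},$$
where $U$ and $V$ are free quaternion matrices of compatible size. This already accounts for the first two of the five solvability conditions in the statement.

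Next I would substitute this parametrization into $A_3YB_3=C_3$. Using $A_{00}=A_3L_{A_2}$, $B_{00}=R_{B_2}B_3$ and $C_{00}=C_3-A_3A_2^{\dagger}C_2B_2^{\dagger}B_3$, the second equation becomes the coupled generalized Sylvester-type equation
$$A_{00}UB_3+A_3VB_{00}=C_{00}$$
in the two unknowns $U$ and $V$. Independently, the original second equation must itself be consistent, which forces $R_{A_3}C_3=0$ and $C_3L_{B_3}=0$; the remaining condition $R_{A_{00}}C_{00}L_{B_{00}}=0$ would then be derived by left-multiplying the coupled equation by $R_{A_{00}}$ and right-multiplying by $L_{B_{00}}$, which annihilates both unknowns and leaves exactly that rank identity.

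The crux of the proof is to invert this coupled equation explicitly. I would apply $A_{00}^{\dagger}$ on the left of the first summand, isolate the residual $V$-equation with the auxiliary matrix $D_{00}=R_{A_{00}}A_3$, and then re-absorb the free parameters back into $U$. Combined with the freedom already carried by $U$ and $V$ from the first reduction, this produces the particular solution $\Phi$ together with the four orthogonal-projector tails $L_{A_2}L_{A_{00}}W_1$, $W_2R_{B_{00}}R_{B_2}$, $L_{A_2}W_3R_{B_3}$ and $L_{A_3}W_4R_{B_2}$. The main obstacle is the bookkeeping in this last step: one must check that the parametric freedom collapses to exactly these four arbitrary matrices with the indicated projector sandwiches, and that no spurious consistency constraint appears beyond $R_{A_{00}}C_{00}L_{B_{00}}=0$. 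The nested idempotents $L_{A_2}L_{A_{00}}$ and $R_{B_{00}}R_{B_2}$ that arise in back-substitution make this a careful but essentially mechanical exercise in manipulating Moore--Penrose identities rather than a conceptual difficulty.
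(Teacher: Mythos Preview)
The paper does not supply its own proof of this lemma: it is quoted verbatim from \cite{He2015} and used as a black box. There is therefore nothing in the present paper to compare your argument against.

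That said, your outline is the standard two-stage reduction used in \cite{He2015} and related work: parametrize all solutions of $A_2YB_2=C_2$, substitute into $A_3YB_3=C_3$, and reduce to the generalized Sylvester-type equation $A_{00}UB_3+A_3VB_{00}=C_{00}$. One small imprecision: the conditions $R_{A_3}C_3=0$ and $C_3L_{B_3}=0$ do not come from demanding that ``the original second equation must itself be consistent'' in isolation. Rather, they drop out of the coupled equation by left-multiplying by $R_{A_3}$ and right-multiplying by $L_{B_3}$ (both of which annihilate $A_{00},A_3$ and $B_3,B_{00}$ respectively, since $A_{00}=A_3L_{A_2}$ and $B_{00}=R_{B_2}B_3$), exactly as you obtain the fifth condition $R_{A_{00}}C_{00}L_{B_{00}}=0$. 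The sufficiency direction and the bookkeeping that collapses the free parameters to precisely the four projector-sandwiched tails rely on the structural inclusions $\operatorname{ran}(A_{00})\subseteq\operatorname{ran}(A_3)$ and $\ker(B_3)\subseteq\ker(B_{00})$; these make the Baksalary--Kala type analysis of $A_{00}UB_3+A_3VB_{00}=C_{00}$ simpler than the general two-term case, but you should make that dependence explicit rather than leave it as ``mechanical.''
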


\begin{theorem}\label{th2.2}
	Assume that $A, A_1,B, C,C_1,$ and $D$ are given with appropriate sizes over $\mathbb{H}$. Set
	$$
	\begin{array}{l}
		A_2=R_AA_1, B_2=R_C,C_2=R_AB,A_3=L_A,B_3=C_1L_C,C_3=DL_C,\\
		A_{00}=A_3L_{A_2}, B_{00}=R_{B_2}B_3,
		C_{00}=C_3-A_3A_2^{\dagger}C_2B_2^{\dagger}B_3,D_{00}=R_{A_{00}}A_3,\\
		\Phi=A_2^{\dagger}C_2B_2^{\dagger}+L_{A_2}A_{00}^{\dagger}C_{00}B_3^{\dagger}-L_{A_2}A_{00}^{\dagger}A_3 D_{00}^{\dagger}R_{A_{00}}C_{00}B_3^{\dagger}+D_{00}^{\dagger}R_{A_{00}}C_{00}B_{00}^{\dagger}R_{B_2}.
	\end{array}
	$$
	Then the system
	\begin{equation}\label{eq2.6}
		\left\{\begin{array}{l}
			AX+A_1YR_C=B,\\
			XC+L_AYC_1=D
		\end{array}\right.
	\end{equation}
	is consistent if and only if
	\begin{equation}\label{eq2.7}
		AD=BC, R_{A_i}C_i=0,C_iL_{B_i}=0, (i=2,3), \ R_{A_{00}}C_{00}L_{B_{00}}=0.
	\end{equation}
	In this case, the general solution to the system \eqref{eq2.6} can be expressed as
	$$
	\begin{array}{l}
		X=A^{\dagger}(B-A_1YR_C)+L_A(D-L_AYC_1)C^{\dagger}+L_AU_1R_C,\\
		Y=\Phi+L_{A_2}L_{A_{00}}U_2+U_3R_{B_{00}}R_{B_2}+L_{A_2}U_4R_{B_3}+L_{A_3}U_5R_{B_2},
	\end{array}
	$$
	where $U_i(i=\overline{1,5})$ are arbitrary matrices over $\mathbb{H}$ with appropriate sizes.
\end{theorem}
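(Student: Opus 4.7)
The plan is to reduce the coupled system \eqref{eq2.6} to a pair of matrix equations in $Y$ alone by eliminating $X$, apply \lemref{lem2.2} to that derived system, and then recover $X$ from the classical formula for the consistent pair $AX=B$, $XC=D$.

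For necessity I would exploit the identities $R_AA=0$, $AL_A=0$, $CL_C=0$, and $R_CC=0$. Multiplying the first equation of \eqref{eq2.6} on the left by $R_A$ kills the $AX$ term and yields $R_AA_1YR_C=R_AB$, which is precisely $A_2YB_2=C_2$. Multiplying the second equation on the right by $L_C$ kills $XC$ and gives $L_AYC_1L_C=DL_C$, i.e., $A_3YB_3=C_3$. Multiplying the first equation on the right by $C$ and the second on the left by $A$ and subtracting yields the compatibility relation $AD=BC$. The remaining conditions in \eqref{eq2.7} are then exactly the solvability criteria for the coupled system $A_2YB_2=C_2$, $A_3YB_3=C_3$ supplied by \lemref{lem2.2}, together with the observation that $C_2L_{B_2}=0$ and $R_{A_3}C_3=0$ are actually forced by $AD=BC$.

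For sufficiency, the hypotheses allow me to apply \lemref{lem2.2} to produce $Y$ in the announced parametrized form. Setting $B'=B-A_1YR_C$ and $D'=D-L_AYC_1$, the two derived identities translate into $R_AB'=0$ and $D'L_C=0$, while $AL_A=0$, $R_CC=0$, together with $AD=BC$ give $AD'=B'C$. These are the classical solvability conditions for $AX=B'$, $XC=D'$, whose general solution is $X=A^{\dagger}B'+L_AD'C^{\dagger}+L_AU_1R_C$; substituting $B'$ and $D'$ recovers the stated expression for $X$.

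The main obstacle I anticipate is confirming that the parametrized pair $(X,Y)$ truly captures \emph{every} solution. The direct computations show the displayed families satisfy \eqref{eq2.6}, but one must also check that an arbitrary solution $(X,Y)$ yields a $Y$ of the prescribed shape for some choice of $U_2,\ldots,U_5$ and an $X$ of the prescribed shape for a suitable $U_1$. This amounts to verifying that the freedom built into the \lemref{lem2.2} solution exhausts the kernel of the $Y$-system, and that the residual freedom $L_AU_1R_C$ exhausts the kernel of the map $X\mapsto(AX,XC)$; both are standard but need to be spelled out to make the parametrization genuinely a general solution.
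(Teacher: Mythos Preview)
Your approach is correct and essentially identical to the paper's: both reduce \eqref{eq2.6} to the pair $A_2YB_2=C_2$, $A_3YB_3=C_3$ together with the compatibility $AD=BC$, invoke Lemma~\ref{lem2.2} for $Y$, and recover $X$ from the classical formula for the consistent pair $(AX,XC)=(B',D')$. Your extra observation that $C_2L_{B_2}=0$ and $R_{A_3}C_3=0$ are automatic consequences of $AD=BC$ (via $R_ABCC^{\dagger}=R_AADC^{\dagger}=0$ and $A^{\dagger}ADL_C=A^{\dagger}BCL_C=0$) is correct and slightly sharpens the stated conditions, and your completeness concern is resolved by noting that each reduction step is a genuine equivalence, so Lemma~\ref{lem2.2} and the classical $X$-formula together exhaust all solutions.
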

\begin{proof}
	It is evident that the solvability of the system \eqref{eq2.6} is identical to the system 
	\begin{equation}\label{eq2.8}
		\left\{\begin{array}{l}
			AX=B-A_1YR_C,\\
			XC=D-L_AYC_1.
		\end{array}\right.
	\end{equation}
	By Lemma \ref{lem2.2}, it follows that the system of  matrix equations \eqref{eq2.8} is solvable if and only if 
	$$
	R_A(B-A_1YR_C)=0,(D-L_AYC_1)L_C=0, A(D-L_AYC_1)=(B-A_1YR_C)C,
	$$
	i.e., $AD=BC$ and
	\begin{equation}\label{eq2.9}
		\left\{\begin{array}{l}
			R_AA_1YR_C=R_AB, \\
			L_AYC_1L_C=DL_C,
		\end{array}
		\right. \Longleftrightarrow
		\left\{\begin{array}{l}
			A_2YB_2=C_2, \\
			A_3YB_3=C_3.
		\end{array}\right.
	\end{equation}
	Hence, when the system of  quaternion matrix equations \eqref{eq2.8} is solvable, we obtain
	$$
	X=A^{\dagger}(B-A_1YR_C)+L_A(D-L_AYC_1)C^{\dagger}+L_AU_1R_C.
	$$
	Next, it is only necessary to consider the solution of the system of quaternion matrix equations \eqref{eq2.9}. According to Lemma \ref{lem2.2}, the system \eqref{eq2.9} is consistent if and only if the condition 
	$$
	R_{A_i}C_i=0,C_iL_{B_i}=0, (i=2,3), \ R_{A_{00}}C_{00}L_{B_{00}}=0
	$$ 
	holds. In this case, the general solution of the system \eqref{eq2.9} is given by 
	$$
	Y=\Phi+L_{A_2}L_{A_{00}}U_2+U_3R_{B_{00}}R_{B_2}+L_{A_2}U_4R_{B_3}+L_{A_3}U_5R_{B_2},
	$$
	where random matrices over $\mathbb{H}$ of the proper orders are $U i(i=\overline{2,5})$.
\end{proof}

\section{\textbf{The solution of \eqref{eq1.1} and its applications} }
\par\setlength{\parindent}{2em}
Drawing from the aforementioned theorem and lemma, we can now derive the conditions for the solvability of the system of dual quaternion matrix equations \eqref{eq1.1} using the ranks of quaternion matrices, and present an expression for the general solution of the system \eqref{eq1.1}. Additionally, we provide relevant applications based on these results.
\par\setlength{\parindent}{2em}The following lemma is due to Marsaglia and Styan \cite{Mar1974}, which can be easily generalized to $\mathbb{H}$.
\begin{lemma}\label{lem2.3}
	Suppose that $A \in \mathbb{H}^{n \times m}, B \in \mathbb{H}^{n \times l}, C \in \mathbb{H}^{k \times m}, D \in \mathbb{H}^{l_1 \times l}$ and $E \in \mathbb{H}^{k \times l_2}$ are given, then
		 $$
		 r\left[\begin{array}{cc}A & B L_D \\ R_E C & 0\end{array}\right]=r\left[\begin{array}{ccc}A & B & 0 \\ C & 0 & E \\ 0 & D & 0\end{array}\right]-r(E)-r(D).
		 $$
\end{lemma}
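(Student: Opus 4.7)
The plan is to reduce the right-hand side matrix $M:=\bigl[\begin{smallmatrix} A & B & 0 \\ C & 0 & E \\ 0 & D & 0 \end{smallmatrix}\bigr]$ by two rounds of invertible block elementary operations, all of which preserve rank over $\mathbb{H}$. The first round introduces the projector terms $BL_D$ and $R_EC$; the second extracts the contribution $r(E)+r(D)$ from the anti-diagonal corner containing $E$ and $D$.

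\textbf{Round 1.} Left-multiplying $M$ by the invertible matrix $\bigl[\begin{smallmatrix} I & 0 & -BD^{\dagger} \\ 0 & I & 0 \\ 0 & 0 & I \end{smallmatrix}\bigr]$ subtracts $BD^{\dagger}\cdot(\text{row }3)$ from row 1 and replaces $(1,2)=B$ by $B-BD^{\dagger}D=BL_D$; right-multiplying by $\bigl[\begin{smallmatrix} I & 0 & 0 \\ 0 & I & 0 \\ -E^{\dagger}C & 0 & I \end{smallmatrix}\bigr]$ replaces $(2,1)=C$ by $C-EE^{\dagger}C=R_EC$. This yields $M'=\bigl[\begin{smallmatrix} A & BL_D & 0 \\ R_EC & 0 & E \\ 0 & D & 0 \end{smallmatrix}\bigr]$ with $r(M')=r(M)$.

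\textbf{Round 2.} Regroup $M'$ as a $2\times 2$ block matrix $\bigl[\begin{smallmatrix} \mathcal{X} & \mathcal{Y} \\ \mathcal{Z} & \mathcal{W} \end{smallmatrix}\bigr]$ with $\mathcal{X}=A$, $\mathcal{Y}=[BL_D,0]$, $\mathcal{Z}=\bigl[\begin{smallmatrix} R_EC \\ 0 \end{smallmatrix}\bigr]$, and anti-diagonal corner $\mathcal{W}=\bigl[\begin{smallmatrix} 0 & E \\ D & 0 \end{smallmatrix}\bigr]$. A direct check of the four M-P axioms gives $\mathcal{W}^{\dagger}=\bigl[\begin{smallmatrix} 0 & D^{\dagger} \\ E^{\dagger} & 0 \end{smallmatrix}\bigr]$, so $r(\mathcal{W})=r(E)+r(D)$, $L_{\mathcal{W}}=\mathrm{diag}(L_D,L_E)$, and $R_{\mathcal{W}}=\mathrm{diag}(R_E,R_D)$. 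Pre- and post-multiplying by $\bigl[\begin{smallmatrix} I & -\mathcal{Y}\mathcal{W}^{\dagger} \\ 0 & I \end{smallmatrix}\bigr]$ and $\bigl[\begin{smallmatrix} I & 0 \\ -\mathcal{W}^{\dagger}\mathcal{Z} & I \end{smallmatrix}\bigr]$ converts $M'$ into $\bigl[\begin{smallmatrix} \mathcal{X}-\mathcal{Y}\mathcal{W}^{\dagger}\mathcal{Z} & \mathcal{Y}L_{\mathcal{W}} \\ R_{\mathcal{W}}\mathcal{Z} & \mathcal{W} \end{smallmatrix}\bigr]$, and the orthogonalities $L_{\mathcal{W}}\mathcal{W}^{\dagger}=\mathcal{W}^{\dagger}R_{\mathcal{W}}=0$ decouple the $\mathcal{W}$-corner, producing the generalized Schur complement identity
\[
r(M')=r(\mathcal{W})+r\begin{bmatrix} \mathcal{X}-\mathcal{Y}\mathcal{W}^{\dagger}\mathcal{Z} & \mathcal{Y}L_{\mathcal{W}} \\ R_{\mathcal{W}}\mathcal{Z} & 0 \end{bmatrix}.
\]

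The identities $L_DD^{\dagger}=0$, $L_D^2=L_D$, and $R_E^2=R_E$ then give $\mathcal{Y}\mathcal{W}^{\dagger}=0$ (hence $\mathcal{X}-\mathcal{Y}\mathcal{W}^{\dagger}\mathcal{Z}=A$), $\mathcal{Y}L_{\mathcal{W}}=[BL_D,0]$, and $R_{\mathcal{W}}\mathcal{Z}=\bigl[\begin{smallmatrix} R_EC \\ 0 \end{smallmatrix}\bigr]$, so the residual matrix equals $\bigl[\begin{smallmatrix} A & BL_D & 0 \\ R_EC & 0 & 0 \\ 0 & 0 & 0 \end{smallmatrix}\bigr]$ with rank $r\bigl[\begin{smallmatrix} A & BL_D \\ R_EC & 0 \end{smallmatrix}\bigr]$. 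Rearranging $r(M)=r(E)+r(D)+r\bigl[\begin{smallmatrix} A & BL_D \\ R_EC & 0 \end{smallmatrix}\bigr]$ gives the lemma. The main obstacle is justifying the corner-decoupling step over $\mathbb{H}$; this requires careful bookkeeping of left- versus right-multiplications since quaternions are non-commutative, but uses only the defining M-P equations, so the passage from $\mathbb{R}$ to $\mathbb{H}$ is automatic.
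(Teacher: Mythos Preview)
Your argument is correct. The paper does not actually supply a proof of this lemma: it is stated with a citation to Marsaglia and Styan (1974) together with the remark that the identity ``can be easily generalized to $\mathbb{H}$.'' What you have written is essentially the classical Marsaglia--Styan proof---clear the $B$ and $C$ blocks against $D$ and $E$ by invertible block row/column operations, then peel off $r(D)+r(E)$ via the generalized Schur complement of the anti-diagonal corner $\mathcal{W}$. The only place that deserves one extra line of justification is the ``decoupling'' of $\mathcal{W}$ in Round~2: after your elementary operations you have $\bigl[\begin{smallmatrix} S & \mathcal{Y}L_{\mathcal{W}} \\ R_{\mathcal{W}}\mathcal{Z} & \mathcal{W}\end{smallmatrix}\bigr]$, and the passage to $r(\mathcal{W})+r\bigl[\begin{smallmatrix} S & \mathcal{Y}L_{\mathcal{W}} \\ R_{\mathcal{W}}\mathcal{Z} & 0\end{smallmatrix}\bigr]$ is not itself an elementary operation. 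It follows from a short direct-sum argument (the column space of $\mathcal{W}$ meets that of the remaining blocks only trivially, since $EE^{\dagger}R_E=0$ and $D^{\dagger}D\,L_D=0$), which works verbatim over $\mathbb{H}$ because it uses only the four M-P axioms, exactly as you note.
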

\begin{theorem}\label{th3.1}
	Let $A=A_0+A_1\epsilon\in\mathbb{DQ}^{m\times n},B=B_0+B_1\epsilon\in\mathbb{DQ}^{m\times k},C=C_0+C_1\epsilon\in\mathbb{DQ}^{k\times l}$ and $D=D_0+D_1\epsilon\in\mathbb{DQ}^{n\times l}$ be given. Set
	$$
	\begin{array}{l}
	B_{11}=B_1-A_1(A_0^{\dagger}B_0+L_{A_0}D_0C_0^{\dagger}),D_{11}=D_1-(A_0^{\dagger}B_0+L_{A_0}D_0C_0^{\dagger})C_1,A_{11}=A_1L_{A_0},\\
    C_{11}=R_{C_0}C_1,A_2=R_{A_0}A_{11}, B_2=R_{C_0},C_2=R_{A_0}B_{11},A_3=L_{A_0},B_3=C_{11}L_{C_0},\\
	C_3=D_{11}L_{C_0},A_{00}=A_3L_{A_2}, B_{00}=R_{B_2}B_3,
	C_{00}=C_3-A_3A_2^{\dagger}C_2B_2^{\dagger}B_3,D_{00}=R_{A_{00}}A_3,\\
	\Phi=A_2^{\dagger}C_2B_2^{\dagger}+L_{A_2}A_{00}^{\dagger}C_{00}B_3^{\dagger}-L_{A_2}A_{00}^{\dagger}A_3 D_{00}^{\dagger}R_{A_{00}}C_{00}B_3^{\dagger}+D_{00}^{\dagger}R_{A_{00}}C_{00}B_{00}^{\dagger}R_{B_2}.
	\end{array}
	$$
Then the following statements hold the same meaning: 
\begin{enumerate}
	\item The system \eqref{eq1.1} is consistent.
	\item 
	\begin{align}
		&\begin{aligned}
			R_{A_0}B_0=0,D_0L_{C_0}=0,	
			\label{eq3.1}
		\end{aligned}\\
    	&\begin{aligned}
		A_0D_0=B_0C_0, A_0D_1-B_0C_1=B_1C_0-A_1D_0,
		\label{eq3.2}
    	\end{aligned}\\
        &\begin{aligned}
    	R_{A_2}C_2=0,C_2L_{B_2}=0,R_{A_3}C_3=0,C_3L_{B_3}=0,R_{A_{00}}C_{00}L_{B_{00}}=0.
    	\label{eq3.3}
        \end{aligned}
	\end{align}
    \item The equations in \eqref{eq3.2} hold, together with the fulfillment of the rank equality conditions, where
   \vspace{0.8cm}
 	\begin{align}
  	&\begin{aligned}
  		r\left[\begin{matrix}
  			A_0&B_0
  		\end{matrix}\right]=r(A_0),
  	 r\left[\begin{matrix}
  		C_0\\D_0
  	\end{matrix}\right]=r(C_0),
  	\label{eq3.4}
  	\end{aligned}\\
  	&\begin{aligned}
  		r\left[\begin{matrix}
  			A_0 &B_1 &A_1\\
  			0 &B_0 &A_0
  		\end{matrix}\right]=r\left[\begin{matrix}
  		A_0&A_1\\
  		0& A_0
  	\end{matrix}\right],  r\left[\begin{matrix}
  	A_0&A_1D_0-B_1C_0
  \end{matrix}\right]=r
(A_0),
  	\label{eq3.5}
  	\end{aligned}\\
  	&\begin{aligned}
  		r\left[\begin{matrix}
  			C_0\\
  			B_0C_1-A_0D_1
  		\end{matrix}\right]=r(C_0),
  	 r\left[\begin{matrix}
  	 	C_0 & 0\\
  	 	D_1 & D_0\\
  	 	C_1 & C_0
  	 \end{matrix}\right]=r\left[\begin{matrix}
  	 C_0 & 0\\
  	 C_1 & C_0
   \end{matrix}\right],
  	\label{eq3.6}
  	\end{aligned}\\
  &\begin{aligned}
  	r\left[\begin{matrix}
  		B_1C_1-A_1D_1 & A_0 & B_1C_0-A_1D_0\\
  		C_0 & 0 & 0\\
  		B_0C_1-A_0D_1 & 0 & 0
  	\end{matrix}\right]=r(A_0)+r(C_0).
  	\label{eq3.7}
  \end{aligned}
  \end{align}   
\end{enumerate}
In such circumstances, the general solution of the system \eqref{eq1.1} can be expressed as
$X=X_0+X_1\epsilon,$
where
\begin{align}
	&\begin{aligned}
		X_0=A_0^{\dagger}B_0+L_{A_0}D_0C_0^{\dagger}+L_{A_0}UR_{C_0},
		\label{eq3.8}
	\end{aligned}\\
   &\begin{aligned}
   	X_1=A_0^{\dagger}(B_{11}-A_{11}UR_{C_0})+L_{A_0}(D_{11}-L_{A_0}UC_{11})C_0^{\dagger}+L_{A_0}U_1R_{C_0},
   	\label{eq3.9}
   \end{aligned}\\
   &\begin{aligned}
   	U=\Phi+L_{A_2}L_{A_{00}}U_2+U_3R_{B_{00}}R_{B_2}+L_{A_2}U_4R_{B_3}+L_{A_3}U_5R_{B_2},
   	\label{eq3.10}
   \end{aligned}
\end{align}
and  $U_i(i=\overline{1,5})$ are arbitrary matrices with appropriate sizes.
\end{theorem}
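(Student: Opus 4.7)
The plan is to write $X = X_0 + X_1\epsilon$ with $X_0,X_1\in\mathbb{H}^{n\times k}$ and use $\epsilon^2 = 0$ to split the dual quaternion system \eqref{eq1.1} into two coupled pairs of ordinary quaternion equations. Expanding $AX = B$ and $XC = D$ and matching standard and infinitesimal parts yields $A_0X_0 = B_0,\ X_0C_0 = D_0$ (the ``real-part'' subsystem in the single unknown $X_0$) together with $A_0X_1 + A_1X_0 = B_1,\ X_0C_1 + X_1C_0 = D_1$ (the ``dual-part'' subsystem, coupled to $X_0$). The solvability of \eqref{eq1.1} is exactly the simultaneous solvability of these two subsystems, so it suffices to analyze them in turn.

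First I would solve the real-part subsystem. By the classical Khatri--Mitra criterion (which is the special case of \lemref{lem2.2} with $B_2 = I$, $A_3 = I$), it is consistent if and only if $R_{A_0}B_0 = 0$, $D_0L_{C_0} = 0$, and $A_0D_0 = B_0C_0$ -- precisely condition \eqref{eq3.1} and the first equation of \eqref{eq3.2} -- and its general solution has the form $X_0 = A_0^\dagger B_0 + L_{A_0}D_0C_0^\dagger + L_{A_0}UR_{C_0}$ with a free parameter $U$, which is \eqref{eq3.8}. Substituting this expression for $X_0$ into the dual-part subsystem and regrouping the terms that involve $U$ on the left yields
$$A_0X_1 + A_{11}UR_{C_0} = B_{11},\qquad X_1C_0 + L_{A_0}UC_{11} = D_{11},$$
a system in the unknowns $X_1$ and $U$ that has exactly the form of \eqref{eq2.6}. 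Applying \thmref{th2.2} with the dictionary $(A,A_1,B,C,C_1,D)\longleftrightarrow(A_0,A_{11},B_{11},C_0,C_{11},D_{11})$ and $(X,Y)\longleftrightarrow(X_1,U)$ delivers both the solvability conditions \eqref{eq3.3} and the closed-form expressions \eqref{eq3.9}--\eqref{eq3.10}. The one ingredient that requires a short direct check is the compatibility condition $A_0D_{11} = B_{11}C_0$ coming from \thmref{th2.2}: using $A_0L_{A_0} = 0$, $A_0A_0^\dagger B_0 = B_0$ (from $R_{A_0}B_0=0$), $D_0C_0^\dagger C_0 = D_0$ (from $D_0L_{C_0}=0$), and $A_0D_0 = B_0C_0$, it collapses after cancellation to $A_0D_1 - B_0C_1 = B_1C_0 - A_1D_0$, which is exactly the second equation of \eqref{eq3.2}. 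This establishes (1) $\Longleftrightarrow$ (2) together with the general-solution formula $X = X_0 + X_1\epsilon$.

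Finally, for (2) $\Longleftrightarrow$ (3) the task is to re-express the projector identities \eqref{eq3.1} and \eqref{eq3.3} as the block-matrix rank equalities \eqref{eq3.4}--\eqref{eq3.7}. The first two identities $R_{A_0}B_0 = 0$ and $D_0L_{C_0} = 0$ yield \eqref{eq3.4} immediately from the elementary identities $r[A_0\ B_0] = r(A_0) + r(R_{A_0}B_0)$ and its column analogue. For the remaining conditions I would unfold the definitions of $A_2,B_2,C_2,A_3,B_3,C_3$ and $A_{00},B_{00},C_{00}$ in terms of $A_0,C_0,A_1,C_1$ and the auxiliary $B_{11},D_{11}$, and then apply \lemref{lem2.3} repeatedly to eliminate every interior $L_{\bullet}$ and $R_{\bullet}$ factor, absorbing each M--P inverse into an additional block row or column. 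The hardest identity will be $R_{A_{00}}C_{00}L_{B_{00}} = 0$, which involves a triply nested projector structure and is responsible for the $3\times 3$ block rank equality \eqref{eq3.7}; handling it cleanly is the main technical obstacle and will require careful bookkeeping with \lemref{lem2.3}, together with auxiliary simplifications using \eqref{eq3.1} and \eqref{eq3.2}, to collapse the resulting rank expression into the stated form.
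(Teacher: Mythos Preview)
Your proposal is correct and follows essentially the same route as the paper: split the dual quaternion system into real and infinitesimal parts, solve the $(A_0X_0,X_0C_0)=(B_0,D_0)$ subsystem via the Khatri--Mitra criterion, substitute the resulting $X_0$ into the dual-part equations to obtain a system of the form \eqref{eq2.6}, apply \thmref{th2.2}, and then convert the projector conditions into rank equalities with \lemref{lem2.3}. Your identification of $A_0D_{11}=B_{11}C_0$ as the source of the second equation in \eqref{eq3.2} and of $R_{A_{00}}C_{00}L_{B_{00}}=0$ as the step requiring the most bookkeeping matches the paper exactly.
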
 
\begin{proof}
	We divide the proof into two parts.
	\par\setlength{\parindent}{2em}\textbf{Part 1.} According to the definitions of dual quaternion matrices multiplication and the equality of dual quaternion matrices, we can derive the system of dual quaternion matrix equations \eqref{eq1.1} are equivalent to the system of quaternion matrix equations
		\begin{equation}\label{eq2.3}
				\left\{\begin{array}{l}
						A_0X_0=B_0,\\X_0C_0=D_0,\\
						A_0X_1+A_1X_0=B_1,\\
						X_0C_1+X_1C_0=D_1.
					\end{array}\right.
			\end{equation}
Hence, solving the system of matrix equations \eqref{eq1.1} over the dual quaternions is effectively reduced to solving the system of quaternion matrix equations \eqref{eq2.3}.
	\par\setlength{\parindent}{2em}\textbf{Part 2.}$(1)\Longleftrightarrow(2)$. Clearly, the system of matrix equations \eqref{eq2.3} can be split into
\begin{equation}\label{eq3.11}
	\left\{\begin{array}{l}
		A_0X_0=B_0,\\
		X_0C_0=D_0,
	\end{array}\right.
\end{equation}
	and 
\begin{equation}\label{eq3.12}
	\left\{\begin{array}{l}
		A_0X_1+A_1X_0=B_1,\\
		X_0C_1+X_1C_0=D1.
	\end{array}\right.
\end{equation}
Therefore, the system of matrix equations \eqref{eq2.3} has a solution if and only if the system of matrix equations \eqref{eq3.11} and \eqref{eq3.12} have a common solution. According to Lemma \ref{lem2.2}, we can conclude that the system \eqref{eq3.11} is consistent if and only if
$$
R_{A_0}B_0=0,D_0L_{C_0}=0,A_0D_0=B_0C_0.
$$
In this case, the general solution of the system of quaternion matrix equations \eqref{eq3.11} is expressed as 
\begin{equation}\label{eq3.13}
	X_0=A_0^{\dagger}B_0+L_{A_0}D_0C_0^{\dagger}+L_{A_0}UR_{C_0},
\end{equation}
where $U$ is an arbitrary matrix over $\mathbb{H}$. 
\par\setlength{\parindent}{2em}Substituting equation \eqref{eq3.13} into the system \eqref{eq3.12}, we can deduce that
\begin{equation}\label{eq3.14}
	\left\{\begin{array}{l}
		A_0X_1+A_1(A_0^{\dagger}B_0+L_{A_0}D_0C_0^{\dagger}+L_{A_0}UR_{C_0})=B_1,\\
		(A_0^{\dagger}B_0+L_{A_0}D_0C_0^{\dagger}+L_{A_0}UR_{C_0})C_1+X_1C_0=D_1,
	\end{array}\right.
\end{equation}
i.e.,
\begin{equation}\label{eq3.15}
	\left\{\begin{array}{l}
		A_0X_1+A_{11}UR_{C_0}=B_{11},\\
	   X_1C_0+L_{A_0}UC_{11}=D_{11}.
	\end{array}\right.
\end{equation}
By Theorem \ref{th2.2}, we have the system of quaternion matrix equations \eqref{eq3.15} is solvable only when
\begin{equation*}
	R_{A_2}C_2=0,C_2L_{B_2}=0,R_{A_3}C_3=0,C_3L_{B_3}=0,R_{A_{00}}C_{00}L_{B_{00}}=0,
\end{equation*}
and
$$
\begin{aligned}
	&A_0D_{11}=B_{11}C_0,\\
	&\Longleftrightarrow A_0[D_1-(A_0^{\dagger}B_0+L_{A_0}D_0C_0^{\dagger})C_1]=[B_1-A_1(A_0^{\dagger}B_0+L_{A_0}D_0C_0^{\dagger})]C_0,\\
   &\Longleftrightarrow A_0D_1-B_0C_1=B_1C_0-A_1D_0.
\end{aligned}
$$
Since $A_0^{\dagger}B_0+L_{A_0}D_0C_0^{\dagger}$ is a particular solution to the system of matrix equations \eqref{eq3.11}, it satisfies 
$A_0(A_0^{\dagger}B_0+L_{A_0}D_0C_0^{\dagger})=B_0$ and $(A_0^{\dagger}B_0+L_{A_0}D_0C_0^{\dagger})C_0=D_0$. In this situation, the general solution of the system \eqref{eq3.15} can be expressed as
$$
\begin{array}{l}
	X_1=A_0^{\dagger}(B_{11}-A_{11}UR_{C_0})+L_{A_0}(D_{11}-L_{A_0}UC_{11})C_0^{\dagger}+L_{A_0}U_1R_{C_0},\\
	U=\Phi+L_{A_2}L_{A_{00}}U_2+U_3R_{B_{00}}R_{B_2}+L_{A_2}U_4R_{B_3}+L_{A_3}U_5R_{B_2},
\end{array}
$$
where $U_i(i=\overline{1,5})$ are arbitrary matrices with appropriate sizes. At this stage, the general solution expression for the system \eqref{eq1.1} is given as $X=X_0+X_1\epsilon$.
\par\setlength{\parindent}{2em}$(2)\Longleftrightarrow(3)$. We only need to demonstrate \eqref{eq3.1} $\Longleftrightarrow$ \eqref{eq3.4} and \eqref{eq3.3} $\Longleftrightarrow$ \eqref{eq3.5}$-$\eqref{eq3.7}, respectively. It's quite evident that $X_0^{'}:=A_0^{\dagger}B_0+L_{A_0}D_0C_0^{\dagger}$ is a particular solution to the system of matrix equations \eqref{eq3.11}, satisfying both $A_0X_0^{'}=B_0$ and $X_0^{'}C_0=D_0$.
\par\setlength{\parindent}{2em}Referring to Lemma \ref{lem2.3}, we find that equations \eqref{eq3.1} is synonymous with equations \eqref{eq3.4}, and obtain
\begin{align*}
	&R_{A_0}B_0=0\Longleftrightarrow r(R_{A_0}B_0)=0\Longleftrightarrow
	r\left[\begin{matrix}
		A_0 & B_0
	\end{matrix}\right]=r(A_0),\\
   &D_0L_{C_0}=0\Longleftrightarrow r(D_0L_{C_0})=0\Longleftrightarrow 
   r\left[\begin{matrix}
   	C_0 \\
   	 D_0
   \end{matrix}\right]=r(C_0).
\end{align*}
Now, our focus shifts to proving \eqref{eq3.3} $\Longleftrightarrow$ \eqref{eq3.5}$-$\eqref{eq3.7}. According to Lemma \ref{lem2.3} and block Gaussian elimination, the following descriptions hold.
\begin{align*}
	&\begin{aligned}
			R_{A_2}C_2=0&\Longleftrightarrow r(R_{A_2}C_2)=0\Longleftrightarrow
		r\left[\begin{matrix}
			A_2 & C_2
		\end{matrix}\right]=r(A_2),\\
	    &\Longleftrightarrow r\left[\begin{matrix}
	    	R_{A_0}A_{11} & R_{A_0}B_{11}
	    \end{matrix}\right]=r(R_{A_0}A_{11}),\\
       &\Longleftrightarrow r\left[\begin{matrix}
    	A_0 & B_{1}-A_1(A_0^{\dagger}B_0+L_{A_0}D_0C_0^{\dagger}) &A_1L_{A_0}
        \end{matrix}\right]=r\left[\begin{matrix}
        A_0 & A_{1}L_{A_0}
       \end{matrix}\right],\\
      &\Longleftrightarrow r\left[\begin{matrix}
    	A_0 & B_{1}-A_1X_0^{'} &A_1\\
    	0 & 0 & A_0
      \end{matrix}\right]=r\left[\begin{matrix}
    	A_0 & A_{1}\\
    	0 & A_0
      \end{matrix}\right],\\
       &\Longleftrightarrow r\left[\begin{matrix}
       	A_0 & B_{1} &A_1\\
       	0 & B_0 & A_0
       \end{matrix}\right]=r\left[\begin{matrix}
       	A_0 & A_{1}\\
       	0 & A_0
       \end{matrix}\right],
	\end{aligned}\\
    &\begin{aligned}
    	C_2L_{B_2}=0&\Longleftrightarrow r(C_2L_{B_2})=0\Longleftrightarrow 
    	r\left[\begin{matrix}
    		B_2 \\
    		C_2
    	\end{matrix}\right]=r(B_2),\\
     &\Longleftrightarrow 
     r\left[\begin{matrix}
     	R_{C_0}\\
     	R_{A_0}B_{11}
     \end{matrix}\right]=r(R_{C_0}),\\
     &\Longleftrightarrow r\left[\begin{matrix}
     B_{1}-A_1X_0^{'} & A_0 &0\\
     	I & 0 & C_0
     \end{matrix}\right]=r\left[\begin{matrix}
     	C_0 & I
     \end{matrix}\right]+r(A_0),\\
     &\Longleftrightarrow r\left[\begin{matrix}
     	A_0 & A_1D_0-B_1C_0
     \end{matrix}\right]=r(A_0).
    \end{aligned}
\end{align*}
Similarly, we can demonstrate that
$$
R_{A_3}C_3=0\Longleftrightarrow r\left[\begin{matrix}
	C_0 \\
	B_0C_1-A_0D_1
\end{matrix}\right]=r(C_0),
$$
and
$$
C_3L_{B_3}=0\Longleftrightarrow r\left[\begin{matrix}
	C_0 & 0\\
	D_1 & D_0\\
	C_1 & C_0
\end{matrix}\right]=r\left[\begin{matrix}
	C_0 & 0\\
	C_1 & C_0
\end{matrix}\right].
$$
Applying Lemma \ref{lem2.3} to $R_{A_{00}}C_{00}L_{B_{00}}=0$, we obtain
\begin{align*}
	r(R_{A_{00}}C_{00}L_{B_{00}})&=0\Longleftrightarrow r\left[\begin{matrix}
		C_{00} & A_{00} \\
		B_{00} & 0
	\end{matrix}\right]=r(A_{00})+r(B_{00}),\\
    &\Longleftrightarrow r\left[\begin{matrix}
    	C_{3}-A_3A_2^{\dagger}C_2B_2^{\dagger}B_3 & A_{3} & 0 \\
    	B_{3} & 0 & B_2\\
    	0 & A_2 &0
    \end{matrix}\right]=r\left[\begin{matrix}
    A_2\\A_3
    \end{matrix}\right]+r\left[\begin{matrix}
    B_2 &  B_3
   \end{matrix}\right],\\
   &\Longleftrightarrow r\left[\begin{matrix}
	C_{3} & A_{3} & 0 \\
	B_{3} & 0 & B_2\\
	0 & A_2 & -A_2A_2^{\dagger}C_2B_2^{\dagger}B_2
   \end{matrix}\right]=r\left[\begin{matrix}
	A_2\\A_3
   \end{matrix}\right]+r\left[\begin{matrix}
	B_2 &  B_3
   \end{matrix}\right],\\
    &\Longleftrightarrow r\left[\begin{matrix}
    	C_{3} & A_{3} & 0 \\
    	B_{3} & 0 & B_2\\
    	0 & A_2 & -C_2
    \end{matrix}\right]=r\left[\begin{matrix}
    	A_2\\A_3
    \end{matrix}\right]+r\left[\begin{matrix}
    	B_2 &  B_3
    \end{matrix}\right],\\
    &\Longleftrightarrow r\left[\begin{matrix}
    	0 & L_{A_{0}} & D_{11}L_{C_0} \\
    	R_{C_{0}} & 0 & C_{11}L_{C_0}\\
    	-R_{A_0}B_{11} & R_{A_0}A_{11} & 0
    \end{matrix}\right]=r\left[\begin{matrix}
    	L_{A_0}\\R_{A_0}A_{11}
    \end{matrix}\right]+r\left[\begin{matrix}
    	R_{C_0} & C_{11} L_{C_0}
    \end{matrix}\right],\\
    &\Longleftrightarrow r\left[\begin{matrix}
    	0 & {A_{0}} & -B_{11}& A_{11} \\
    	{C_{0}} & 0 & 0&0\\
    	D_{11} &0&0 &L_{A_0}\\
    	C_{11} &0&R_{C_0} &0
    \end{matrix}\right]=r\left[\begin{matrix}
    0 &	L_{A_0}\\
    A_0 & A_{11}
    \end{matrix}\right]+r\left[\begin{matrix}
    0&C_0\\
    	R_{C_0} & C_{11}
    \end{matrix}\right],\\
     &\Longleftrightarrow r\left[\begin{matrix}
    	0 & {A_{0}} & -B_{11}& A_{1} &0\\
    	{C_{0}} & 0 & 0&0&0\\
    	D_{11} &0&0 &I&0\\
    	C_{1} &0&I &0&C_0\\
    	0&0&0&A_0&0
    \end{matrix}\right]=r\left[\begin{matrix}
    	0 &	I\\
    	A_0 & A_{1}
    \end{matrix}\right]+r\left[\begin{matrix}
    	0&C_0\\
    	I & C_{1}
    \end{matrix}\right],\\
    &\Longleftrightarrow r\left[\begin{matrix}
    	B_1C_1-A_1D_1 & {A_{0}} & B_{1}C_0-A_1D_0\\
    	{C_{0}} & 0 & 0\\
    	B_0C_1-A_0D_1 &0&0 
    \end{matrix}\right]=r(A_0)+r(C_0),
\end{align*}
i.e. \eqref{eq3.3} $\Longleftrightarrow$ \eqref{eq3.5}$-$\eqref{eq3.7}.
\end{proof}

As several applications of Theorem \ref{th3.1}, we provide the necessary and sufficient conditions for the existence of solutions and $\eta-$Hermitian solutions to the dual quaternion matrix equations $AX=B$ and $XC=D$.

\begin{corollary}\label{cor3.1}
		Let $A=A_0+A_1\epsilon\in\mathbb{DQ}^{m\times n}$ and $B=B_0+B_1\epsilon\in\mathbb{DQ}^{m\times k}$ be given. Set
		$$
		\begin{array}{l}
			B_{11}=B_1-A_1A_0^{\dagger}B_0,\ A_{11}=A_1L_{A_0}, A_2=R_{A_0}A_{11},\ C_2=R_{A_0}B_{11}.
		\end{array}
		$$
		Then the following descriptions are equivalent:
		\begin{enumerate}
			\item The matrix equation $AX=B$ is solvable.
			\item $R_{A_0}B_0=0,\ R_{A_2}C_2=0$.
			\item $r\left[\begin{matrix}
				A_0&B_0
			\end{matrix}\right]=r(A_0)$, $r\left[\begin{matrix}
			A_0 & B_{1} &A_1\\
			0 & B_0 & A_0
		\end{matrix}\right]=r\left[\begin{matrix}
		A_0 & A_{1}\\
		0 & A_0
	\end{matrix}\right]$.
		\end{enumerate}
In this situation, the general solution of the dual quaternion matrix equation $AX=B$ can be expressed as $X=X_0+X_1\epsilon$, where
$$
\begin{array}{l}

		X_0=A_0^{\dagger}B_0+L_{A_0}W,\\
		X_1=A_0^{\dagger}(B_{11}-A_{11}W)+L_{A_0}W_1,\\
		W=A_2^{\dagger}C_2+L_{A_2}W_2,
\end{array}
$$
and $W_i (i = 1,2)$ represent arbitrary matrices over $\mathbb{H}$ with suitable dimensions.
\end{corollary}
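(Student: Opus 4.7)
The strategy mirrors the proof of Theorem \ref{th3.1} but is considerably simpler because we only have a single dual quaternion matrix equation. The plan is to reduce $AX = B$ to a coupled pair of quaternion matrix equations by expanding $A = A_0 + A_1\epsilon$ and $X = X_0 + X_1\epsilon$, then peel them off one at a time, exactly as in Part 2 of \thmref{th3.1}. Concretely, equating the standard and infinitesimal parts yields
$$
A_0 X_0 = B_0, \qquad A_0 X_1 + A_1 X_0 = B_1,
$$
so that the corollary reduces to the consistency analysis of this pair over $\mathbb{H}$.

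Next I would handle the first equation by the classical single-equation result: it is consistent if and only if $R_{A_0}B_0 = 0$, and its general solution is $X_0 = A_0^\dagger B_0 + L_{A_0} W$ for an arbitrary quaternion matrix $W$. Substituting this into the second equation and using the abbreviations $A_{11} = A_1 L_{A_0}$ and $B_{11} = B_1 - A_1 A_0^\dagger B_0$ turns it into the two-term equation
$$
A_0 X_1 + A_{11} W = B_{11}.
$$
Multiplying on the left by $R_{A_0}$ kills the $A_0 X_1$ term and produces the reduced equation $A_2 W = C_2$, whose solvability is equivalent to $R_{A_2} C_2 = 0$ with general solution $W = A_2^\dagger C_2 + L_{A_2} W_2$. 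Once $W$ is fixed, $X_1$ is read off from the standard formula $X_1 = A_0^\dagger(B_{11} - A_{11} W) + L_{A_0} W_1$. This establishes $(1)\Leftrightarrow(2)$ along with the advertised formulas for $X_0$ and $X_1$.

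For $(2)\Leftrightarrow(3)$, the first rank equivalence $R_{A_0} B_0 = 0 \iff r[A_0,\, B_0] = r(A_0)$ is the standard rank identity for $R_{A_0}$. The second equivalence, $R_{A_2} C_2 = 0$ with the displayed rank formula, follows by the same block manipulation carried out in \thmref{th3.1}: expand $A_2 = R_{A_0} A_1 L_{A_0}$ and $C_2 = R_{A_0} B_{11}$, and then apply \lemref{lem2.3} together with elementary block row/column operations to rewrite the rank of the stacked block in terms of $A_0$, $A_1$, $B_0$, $B_1$. Since the calculation is a strict specialization of what appears in the proof of \thmref{th3.1} (taking $C_0 = 0$, $D_0 = 0$, $C_1 = 0$, $D_1 = 0$), no new technical difficulty arises; the only care needed is bookkeeping the block manipulations correctly, which is the mildest potential obstacle.
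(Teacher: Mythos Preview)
Your proposal is correct and follows essentially the same route as the paper: the paper states Corollary~\ref{cor3.1} as a direct specialization of Theorem~\ref{th3.1} (with $C$ and $D$ absent) and gives no separate proof, while you have spelled out exactly that specialization by expanding into the standard/infinitesimal parts, solving $A_0X_0=B_0$, substituting, and reducing the second equation via $R_{A_0}$ to $A_2W=C_2$. The rank equivalence argument you indicate is the same block-elimination computation appearing in the proof of Theorem~\ref{th3.1}, so nothing new is required.
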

\begin{corollary}\label{cor3.2}
	Suppose that $C=C_0+C_1\epsilon\in\mathbb{DQ}^{m\times n}$, and $D=D_0+D_1\epsilon\in\mathbb{DQ}^{k\times n}$. Define
	$$
	\begin{array}{l}
		D_{11}=D_1-D_0C_0^{\dagger}C_1,\ C_{11}=R_{C_0}C_1,\ B_3=C_{11}L_{C_0},\ C_3=D_{11}L_{C_0}.
	\end{array}
	$$
Then the following statements are equivalent:
\begin{enumerate}
	\item The matrix equation $XC=D$ is solvable.
	\item $D_0L_{C_0}=0,\ C_3L_{B_3}=0$.
	\item $r\left[\begin{matrix}
		C_0 \\
		D_0
	\end{matrix}\right]=r(C_0)$, $r\left[\begin{matrix}
	C_0 & 0\\
	D_1 & D_0\\
	C_1 & C_0
\end{matrix}\right]=r\left[\begin{matrix}
C_0 & 0\\
C_1 & C_0
\end{matrix}\right]$.
\end{enumerate}
In this case, the general solution of the dual quaternion matrix equation $XC=D$ can be expressed as $X=X_0+X_1\epsilon$, where
$$
\begin{array}{l}
	X_0=D_0C_0^{\dagger}+UR_{C_0},\\
	X_1=(D_{11}-UC_{11})C_0^{\dagger}+U_1R_{C_0},\\
	U=C_3B_3^{\dagger}+U_2R_{B_3},
\end{array}
$$
and $U_i (i = 1,2)$ are arbitrary matrices over $\mathbb{H}$ with appropriate sizes.
\end{corollary}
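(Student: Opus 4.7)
The plan is to reduce the dual quaternion equation $XC=D$ to a coupled system of quaternion matrix equations, solve these in sequence, and then translate the projector-type solvability conditions into rank equalities via \lemref{lem2.3}. Writing $X=X_0+X_1\epsilon$ and equating standard and infinitesimal parts in $XC=D$ yields the two quaternion equations
\[
X_0C_0=D_0,\qquad X_0C_1+X_1C_0=D_1,
\]
so the solvability of the dual equation amounts to the simultaneous solvability of these two.

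First I would handle the standard part. The equation $X_0C_0=D_0$ is consistent iff $D_0L_{C_0}=0$, with general solution $X_0=D_0C_0^{\dagger}+UR_{C_0}$ for a free parameter $U$. Substituting this into the infinitesimal equation and using $R_{C_0}C_1=C_{11}$ together with $D_{11}=D_1-D_0C_0^{\dagger}C_1$ converts the second equation into $X_1C_0+UC_{11}=D_{11}$. Right-multiplying by $L_{C_0}$ annihilates $X_1C_0$ (since $C_0L_{C_0}=0$) and forces $UB_3=C_3$, which is solvable iff $C_3L_{B_3}=0$ with general solution $U=C_3B_3^{\dagger}+U_2R_{B_3}$. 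Once $U$ satisfies $UB_3=C_3$, the residual equation $X_1C_0=D_{11}-UC_{11}$ is automatically consistent because $(D_{11}-UC_{11})L_{C_0}=C_3-UB_3=0$, so its general solution is $X_1=(D_{11}-UC_{11})C_0^{\dagger}+U_1R_{C_0}$. Assembling these pieces gives $(1)\Longleftrightarrow(2)$ together with the stated formulas for $X=X_0+X_1\epsilon$.

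It remains to establish $(2)\Longleftrightarrow(3)$, which converts the two projector conditions into rank equalities. The equivalence $D_0L_{C_0}=0\Longleftrightarrow r\left[\begin{matrix}C_0\\ D_0\end{matrix}\right]=r(C_0)$ is immediate from \lemref{lem2.3}. For the second equivalence, I would use the identity $r(XL_Y)=r\left[\begin{matrix}X\\ Y\end{matrix}\right]-r(Y)$ to rewrite $C_3L_{B_3}=0$ as $r\left[\begin{matrix}C_{11}\\ D_{11}\\ C_0\end{matrix}\right]=r\left[\begin{matrix}C_{11}\\ C_0\end{matrix}\right]$, then substitute $C_{11}=R_{C_0}C_1$ and $D_{11}=D_1-D_0C_0^{\dagger}C_1$ and absorb the $C_0C_0^{\dagger}$ and $D_0C_0^{\dagger}C_1$ terms into the $C_0$-rows via block row operations to reach the stated equality. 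The main obstacle is this final block-matrix reduction, but it parallels the analogous step in the proof of \thmref{th3.1} and amounts to routine block Gaussian elimination.
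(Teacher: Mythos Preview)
Your proposal is correct and follows essentially the same route as the paper. The paper presents Corollary~3.2 as a direct specialization of \thmref{th3.1} (with no separate proof given), and your argument is precisely what one obtains by running the proof of \thmref{th3.1} in the degenerate case where the $A$-side is absent: split into standard and infinitesimal parts, solve $X_0C_0=D_0$, substitute into the second equation to obtain $X_1C_0+UC_{11}=D_{11}$, extract the compatibility condition $UB_3=C_3$ by right-multiplying by $L_{C_0}$, and finally convert the projector conditions to rank equalities via \lemref{lem2.3}; the block reduction you flag at the end is exactly the computation carried out for $C_3L_{B_3}=0$ in the proof of \thmref{th3.1}.
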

\begin{corollary}\label{cor3.3}
	Consider the $\eta$-Hermitian solutions of the dual quaternion matrix equation $AX=B$, and $B^{\eta^*}=B$. Denote
		$$
	\begin{array}{l}
		B_{11}=B_1-A_1(A_0^{\dagger}B_0+L_{A_0}B_0(A_0^{\eta^*})^{\dagger}),D_{11}=B_1-(A_0^{\dagger}B_0+L_{A_0}B_0(A_0^{\eta^*})^{\dagger})A_1^{\eta^*},\\
		A_{11}=A_1L_{A_0},A_2=R_{A_0}A_{11}, B_2=R_{A_0^{\eta^*}},C_2=R_{A_0}B_{11},C_3=D_{11}L_{A_0^{\eta^*}},\\
		A_{00}=B_2^{\eta^*}L_{A_2}, B_{00}=R_{B_2}A_2^{\eta^*},
		C_{00}=C_3-B_2^{\eta^*}A_2^{\dagger}C_2B_2^{\dagger}A_2^{\eta^*},D_{00}=R_{A_{00}}B_2^{\eta^*},\\
		\Phi=A_2^{\dagger}C_2B_2^{\dagger}+L_{A_2}A_{00}^{\dagger}C_{00}(A_2^{\eta^*})^{\dagger}-L_{A_2}A_{00}^{\dagger}B_2^{\eta^*} D_{00}^{\dagger}R_{A_{00}}C_{00}(A_2^{\eta^*})^{\dagger}+D_{00}^{\dagger}R_{A_{00}}C_{00}B_{00}^{\dagger}R_{B_2}.
	\end{array}
	$$
	Then the following statements hold the same meaning: 
	\begin{enumerate}
		\item The matrix equation $AX=B$ is consistent.
		\item 
		\begin{align}
			&\begin{aligned}		
				A_0B_0=B_0A_0^{\eta^*}, A_0B_1-B_0A_1^{\eta^*}=B_1A_0^{\eta^*}-A_1B_0,
				\label{eq3.17}
			\end{aligned}\\
			&\begin{aligned}
			R_{A_0}B_0=0,		R_{A_2}C_2=0,R_{B_2^{\eta^*}}C_3=0,R_{A_{00}}C_{00}L_{B_{00}}=0.
				\label{eq3.18}
			\end{aligned}
		\end{align}
		\item The equations in \eqref{eq3.17} hold, and
		\vspace{0.8cm}
		\begin{align}
			&\begin{aligned}
				r\left[\begin{matrix}
					A_0&B_0
				\end{matrix}\right]=r(A_0),
				\label{eq3.19}
			\end{aligned}\\
			&\begin{aligned}
				r\left[\begin{matrix}
					A_0 &B_1 &A_1\\
					0 &B_0 &A_0
				\end{matrix}\right]=r\left[\begin{matrix}
					A_0&A_1\\
					0& A_0
				\end{matrix}\right],  r\left[\begin{matrix}
					A_0 & A_1B_0-B_1A_0^{\eta^*}
				\end{matrix}\right]=r
				(A_0),
				\label{eq3.20}
			\end{aligned}\\
			&\begin{aligned}
				r\left[\begin{matrix}
					B_1A_1^{\eta^*}-A_1B_1 & A_0 & B_1A_0^{\eta^*}-A_1B_0\\
					A_0^{\eta^*} & 0 & 0\\
					B_0A_1^{\eta^*}-A_0B_1 & 0 & 0
				\end{matrix}\right]=r(A_0)+r(A_0^{\eta^*})=2r(A_0).
				\label{eq3.21}
			\end{aligned}
		\end{align}   
	\end{enumerate}
	In this case, the general solution of the matrix equation $AX=B$ can be expressed as $X=X_0+X_1\epsilon$, where
$$
X_0=\dfrac{\widetilde{X_0}+\widetilde{X_0}^{\eta^*}}{2},
X_1=\dfrac{\widetilde{X_1}+\widetilde{X_1}^{\eta^*}}{2},
$$
and
	\begin{align}
		&\begin{aligned}
			\widetilde{X_0}=A_0^{\dagger}B_0+L_{A_0}B_0(A_0^{\eta^*})^{\dagger}+L_{A_0}UR_{A_0^{\eta^*}},
			\label{eq3.8}
		\end{aligned}\\
		&\begin{aligned}
			\widetilde{X_1}=A_0^{\dagger}(B_{11}-A_{11}UR_{A_0^{\eta^*}})+L_{A_0}(D_{11}-L_{A_0}UA_{11}^{\eta^*})(A_0^{\eta^*})^{\dagger}+L_{A_0}U_1R_{A_0^{\eta^*}},
			\label{eq3.9}
		\end{aligned}\\
		&\begin{aligned}
			U=\Phi+L_{A_2}L_{A_{00}}U_2+U_3R_{B_{00}}R_{B_2}+L_{A_2}U_4R_{A_2^{\eta^*}}+L_{B_2^{\eta^*}}U_5R_{B_2},
			\label{eq3.10}
		\end{aligned}
	\end{align}
  $U_i(i=\overline{1,5})$ are arbitrary matrices with appropriate sizes.
\end{corollary}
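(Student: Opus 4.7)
The strategy is to reduce the $\eta$-Hermitian solvability question for $AX=B$ to a two-sided system that fits the hypotheses of Theorem~\ref{th3.1}. If $X=X^{\eta^*}$ satisfies $AX=B$, then applying $\eta^*$ to both sides and using $B^{\eta^*}=B$ together with $X^{\eta^*}=X$ yields $XA^{\eta^*}=B$. Conversely, given any (not necessarily $\eta$-Hermitian) solution $\widetilde X$ of the pair
\[
A\widetilde X=B,\qquad \widetilde X A^{\eta^*}=B,
\]
the matrix $X=(\widetilde X+\widetilde X^{\eta^*})/2$ is $\eta$-Hermitian by Proposition~\ref{pro1.1}(3); moreover $A\widetilde X^{\eta^*}=(\widetilde X A^{\eta^*})^{\eta^*}=B^{\eta^*}=B$, so $AX=B$. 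Every $\eta$-Hermitian solution of $AX=B$ already equals its own $\eta^*$-average, so this reduction is lossless.

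The next step is to invoke Theorem~\ref{th3.1} with the substitution $C=A^{\eta^*}$, $D=B$, i.e.\ $C_0=A_0^{\eta^*}$, $C_1=A_1^{\eta^*}$, $D_0=B_0$, $D_1=B_1$. The derived quantities in the corollary are then the specializations of those in Theorem~\ref{th3.1}, once one records the identifications
\[
A_3=L_{A_0}=B_2^{\eta^*},\qquad C_{11}=R_{A_0^{\eta^*}}A_1^{\eta^*}=A_{11}^{\eta^*},\qquad B_3=C_{11}L_{A_0^{\eta^*}}=A_2^{\eta^*},
\]
which rewrite $A_{00},B_{00},C_{00},D_{00}$ and $\Phi$ into the forms displayed in the statement. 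The compatibility equations $A_0D_0=B_0C_0$ and $A_0D_1-B_0C_1=B_1C_0-A_1D_0$ become the two equations in \eqref{eq3.17}. Among the range conditions \eqref{eq3.1} and \eqref{eq3.3}, the identities $D_0L_{C_0}=0$, $C_2L_{B_2}=0$, and $C_3L_{B_3}=0$ are $\eta^*$-conjugates of $R_{A_0}B_0=0$, $R_{A_2}C_2=0$, and $R_{B_2^{\eta^*}}C_3=0$ respectively (using $B_0^{\eta^*}=B_0$, $B_1^{\eta^*}=B_1$, and, for the middle one, the already-imposed compatibility $A_0B_0=B_0A_0^{\eta^*}$ to verify $D_{11}^{\eta^*}=B_{11}$), hence redundant. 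What survives is exactly \eqref{eq3.18}.

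For the rank reformulation $(2)\Longleftrightarrow(3)$, I would mimic the block-matrix manipulations used in the proof of Theorem~\ref{th3.1}, applying \lemref{lem2.3} to each surviving rank-zero condition. The $\eta^*$-symmetry of $B$ again forces pairs of rank identities from Theorem~\ref{th3.1} to coincide up to conjugate transpose, so only the three families \eqref{eq3.19}--\eqref{eq3.21} are independent, and $r(A_0^{\eta^*})=r(A_0)$ collapses the right-hand side of the largest block-rank equation to $2r(A_0)$. The explicit general solution is obtained by reading off the formulas of Theorem~\ref{th3.1} under the substitution to get $\widetilde X_0,\widetilde X_1,U$, and setting $X_0=(\widetilde X_0+\widetilde X_0^{\eta^*})/2$, $X_1=(\widetilde X_1+\widetilde X_1^{\eta^*})/2$.

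The main obstacle will be the bookkeeping in the redundancy reductions: verifying carefully that $D_{11}^{\eta^*}=B_{11}$ under the compatibility condition, so that $C_3L_{B_3}=0$ is genuinely $\eta^*$-equivalent to $R_{A_2}C_2=0$, and then matching each block-rank identity in $(3)$ against the six of Theorem~\ref{th3.1} modulo conjugate transpose. A secondary subtlety is confirming that averaging with the $\eta^*$-conjugate exhausts the set of $\eta$-Hermitian solutions rather than a proper subfamily; this is ensured because every $\eta$-Hermitian $X$ is its own $\eta^*$-average, so the parameters $U_i$ of Theorem~\ref{th3.1} can be chosen (e.g.\ $\widetilde X=X$) to recover any prescribed $\eta$-Hermitian $X$.
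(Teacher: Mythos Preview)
Your approach coincides with the paper's: reduce the $\eta$-Hermitian problem for $AX=B$ to the two-sided system $(A\widetilde X,\widetilde X A^{\eta^*})=(B,B)$ via the averaging trick, then invoke Theorem~\ref{th3.1} with $C=A^{\eta^*}$, $D=B$. The paper's proof is in fact terser than yours: it writes the component form \eqref{eq3.25} of the two-sided system, records the averaging bijection, and then simply asserts that Theorem~\ref{th3.1} yields the stated conditions and solution, without carrying out any of the redundancy bookkeeping you outline.

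One small caution on that bookkeeping: the literal identity $D_{11}^{\eta^*}=B_{11}$ need not hold, because the particular solution $X_0'=A_0^\dagger B_0+L_{A_0}B_0(A_0^{\eta^*})^\dagger$ is not in general $\eta$-Hermitian. What is true is that $X_0'-(X_0')^{\eta^*}=L_{A_0}ZR_{A_0^{\eta^*}}$ for some $Z$ (both $X_0'$ and $(X_0')^{\eta^*}$ solve $A_0Y=B_0$, $YA_0^{\eta^*}=B_0$), and this is enough to make the relevant conditions $\eta^*$-equivalent after multiplication by the surrounding projectors. The natural pairing is $R_{A_2}C_2\!=\!0 \leftrightarrow C_3L_{B_3}\!=\!0$ and $C_2L_{B_2}\!=\!0 \leftrightarrow R_{A_3}C_3\!=\!0$; keeping one from each pair gives exactly \eqref{eq3.18}. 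This is precisely the ``bookkeeping obstacle'' you flagged, so your overall plan is sound.
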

\begin{proof}
	By applying the definitions of dual quaternion matrices multiplication and the equality of dual quaternion matrices, we can establish that the dual quaternion matrix equation $AX=B$ are equivalent to the system of quaternion matrix equations
	\begin{equation}\label{eq3.16}
				\left\{\begin{array}{l}
						A_0X_0=B_0,\\
						A_0X_1+A_1X_0=B_1.\\
					\end{array}\right.
	\end{equation}
Now, we only need to provide the $\eta$-Hermitian solutions to the system of quaternion matrix equations \eqref{eq3.16}.
It is evident that the system \eqref{eq3.16} possess $\eta$-Hermitian solutions if and only if the system 
	\begin{equation}\label{eq3.25}
		\left\{\begin{array}{l}
			A_0\widetilde{X_0}=B_0,\\
			\widetilde{X_0}A_0^{\eta^*}=B_0,\\
			A_0\widetilde{X_1}+A_1\widetilde{X_0}=B_1,\\
			\widetilde{X_1}A_0^{\eta^*}+\widetilde{X_0}A_1^{\eta^*}=B_1
		\end{array}\right.
	\end{equation}
	 has solutions. Indeed, if the system \eqref{eq3.16} has $\eta$-Hermitian solutions $X_0$ and $X_1$, it is clear that $X_0$ and $X_1$ serve as solutions to the system \eqref{eq3.25}. Conversely, if the system \eqref{eq3.25} has solutions $\widetilde{X_0}$ and $\widetilde{X_1}$, then the system \eqref{eq3.16} possesses solutions
	 $$
	 X_0=\dfrac{\widetilde{X_0}+\widetilde{X_0}^{\eta^*}}{2},
	 X_1=\dfrac{\widetilde{X_1}+\widetilde{X_1}^{\eta^*}}{2}.
	 $$
\par\setlength{\parindent}{2em}Furthermore, by employing Theorem \ref{th3.1}, it is possible to establish both the necessary and sufficient conditions for the solvability of the system \eqref{eq3.25}, along with an expression for its general solution.
\end{proof}
\begin{remark}
	By applying the same method, we can obtain $\eta$-Hermitian solutions for the dual quaternion matrix equation $XC=D$, and since the structure of the solutions is nearly identical to the $\eta$-Hermitian solutions of $AX=B$, we omit them here.
\end{remark}

\section{Numerical example}
\begin{example}
	 \emph{Given the dual quaternion matrices:}
	 $$
	\begin{aligned}
	 &A=A_0+A_1\epsilon=\left[\begin{array}{cc}
	 {i}  & 0 \\
	 0  & j
	 \end{array}\right]+\left[\begin{array}{cc}
	k & j \\
	 0 & i
	 \end{array}\right]\epsilon,\\
	 &B=B_0+B_1\epsilon=\left[\begin{array}{cc}
	 i &  -1 \\
	 0 & i
	 \end{array}\right]+\left[\begin{array}{cc}
	 k &  -1+i+j \\
	 -1 & 0
     \end{array}\right]\epsilon,\\
	 &C=C_0+C_1\epsilon=\left[\begin{array}{cc}
	 1+i  & 0 \\
	 j  & k
	 \end{array}\right]+\left[\begin{array}{cc}
	 0 & 1 \\
	 j & 0
	 \end{array}\right]\epsilon,\\
    & D=D_0+D_1\epsilon=\left[\begin{array}{cc}
     	1+i+k  & -j \\
     	-i  & -1
     \end{array}\right]+\left[\begin{array}{cc}
     	2k & 1-j \\
     	-i+2j-k & k
     \end{array}\right]\epsilon.\\
	 \end{aligned}
	 $$
	 \emph{Through calculation, it can be determined that}
	 $$
	 	 A_0D_0=B_0C_0
	 	 =\left[\begin{matrix}
	 		-1+i-j & -k\\
	 		k & -j
	 	\end{matrix}\right], 
     	 A_0D_1-B_0C_1=B_1C_0-A_1D_0
     	 =\left[\begin{matrix}
     	 	-j & -k\\
     	 	-2-i & i
     	 \end{matrix}\right],
	 $$
\emph{and}	 
 	\begin{align*}
	&\begin{aligned}
		r\left[\begin{matrix}
			A_0&B_0
		\end{matrix}\right]=r(A_0)=2, \
		r\left[\begin{matrix}
			C_0\\D_0
		\end{matrix}\right]=r(C_0)=2,
	\end{aligned}\\
	&\begin{aligned}
		r\left[\begin{matrix}
			A_0 &B_1 &A_1\\
			0 &B_0 &A_0
		\end{matrix}\right]=r\left[\begin{matrix}
			A_0&A_1\\
			0& A_0
		\end{matrix}\right]= 4, \ r\left[\begin{matrix}
			A_0&A_1D_0-B_1C_0
		\end{matrix}\right]=r
		(A_0)=2,
	\end{aligned}\\
	&\begin{aligned}
		r\left[\begin{matrix}
			C_0\\
			B_0C_1-A_0D_1
		\end{matrix}\right]=r(C_0)=2,\
		r\left[\begin{matrix}
			C_0 & 0\\
			D_1 & D_0\\
			C_1 & C_0
		\end{matrix}\right]=r\left[\begin{matrix}
			C_0 & 0\\
			C_1 & C_0
		\end{matrix}\right]=4,
	\end{aligned}\\
	&\begin{aligned}
		r\left[\begin{matrix}
			B_1C_1-A_1D_1 & A_0 & B_1C_0-A_1D_0\\
			C_0 & 0 & 0\\
			B_0C_1-A_0D_1 & 0 & 0
		\end{matrix}\right]=r(A_0)+r(C_0)=4.
	\end{aligned}
\end{align*} 
\emph{Thus, by Theorem \ref{th3.1}, we conclude that the system of dual quaternion matrix equations \eqref{eq1.1} is solvable, with the general solution expressed as}	 $$
X=X_0+X_1\epsilon=\left[\begin{matrix}
	1 & i\\
	0 & k
\end{matrix}\right]+\left[\begin{matrix}
	0 & i\\
	j & 1
\end{matrix}\right]\epsilon.
	 $$
\end{example}

\section{\textbf{Conclusion}}

In this paper, we have defined $\eta$-Hermitian matrices in the context of dual quaternions and investigated their relevant properties. Subsequently, leveraging matrix Moore-Penrose inverse and rank, we have established both necessary and sufficient conditions for the solvability of the system of dual quaternion matrix equations \eqref{eq1.1}. Additionally, we have presented an expression for the general solution when the system \eqref{eq1.1} is solvable. In an applied context, we have provided the necessary and sufficient conditions, as well as the general expressions for solutions and $\eta$-Hermitian solutions, for the dual quaternion matrix equations $AX=B$ and $XC=D$. Finally, we have validated the primary research outcomes of this paper through a numerical example. Due to the connection between the hand-eye calibration model and the matrix equation $AX=ZB$, as evidenced by reference \cite{Zhuang}, we will consider the solutions to the more general matrix equation $AX-YB=C$ over the dual quaternion algebra.


\begin{thebibliography}{99}


\bibitem {Hamilton} W.R. Hamilton, \textit{Lectures on quaternions}. Dublin: Hodges and Smith; 1853.

\bibitem {ChengBJ} J. Cheng, J. Kim, Z. Jiang and W. Che, Dual quaternion-based graph SLAM, \textit{Robot. Auton. Syst}. 2016, 77: 15-24.

\bibitem {DaniilidisBJ} K. Daniilidis, Hand-eye calibration using dual quaternions. \textit{Int. J. Robot. Res}. 1999, 18 (3): 286-298.

\bibitem {WangBJ} X. Wang, C. Yu and Z. Lin, A dual quaternion solution to attitude and position control for rigid body coordination. \textit{IEEE Trans. Rob.} 2012, 28: 1162-1170.

\bibitem {WangBJ1} X. Wang, H. Zhu, On the comparisons of unit dual quaternion and homogeneous transformation matrix. \textit{Adv. Appl. Clifford Algebr}. 2014, 24: 213-229.

\bibitem {KenrightBJ} B. Kenright, A beginners guide to dual-quaternions. \textit{20th International Conference in
Central Europe on Computer Graphics, Visualization and Computer Vision, Plzen, Czech,} 2012.

\bibitem {Ling}C. Ling, H.J. He and L.Q. Qi, Singular values of dual quaternion matrices and their low-rank approximations. \textit{Numer. Funct. Anal. Optim}. 2022, 43 (12): 1423-1458.

\bibitem {He2015} Z.H. He, Q.W. Wang, The general solutions to some systems of matrix equations. \textit{Linear Multilinear Algebra}. 2015, 63 (10): 2017-2032.

\bibitem {Zhuang} H.Q. Zhuang, Z.S. Roth and R. Sudhakar, Simultaneous robot/world and tool/flange calibration by solving homogeneous transformation equations of the form $AX=YB$. \textit{IEEE Trans. Robot Autom}. 1994, 10 (4): 549-554.


\bibitem {Clifford} W. K. Clifford, Preliminary sketch of bi-quaternions.\textit{ Proc. Lond. Math. Soc.} 1873, 4: 381-395.

\bibitem {Liu1} X. Liu, Y. Zhang, Consistency of split quaternion matrix
equations $AX^{\star}-XB=CY+D$ and  $X-AX^{\star}B=CY+D$.\textit{ Adv. Appl. Clifford Algebr.} 2019, 29:64.

\bibitem {Liu2} X. Liu, Y. Zhang, Least squares $X = \pm X^{\eta^*}$ solutions to split quaternion matrix equation $AXA^{\eta^*}= B$.\textit{ Math. Methods Appl. Sci.} 2020,  43 (5): 2189-2201.

\bibitem {Liu3} X. Liu, Z.H. He, On the split quaternion matrix equation $AX = B$. \textit{Banach J. Math. Anal.} 2020, 14: 228-248.


\bibitem {Chen1}Z.M. Chen, C. Ling, L.Q. Qi and H. Yan, A regularization-patching dual quaternion optimization method for solving the hand-eye calibration problem. https://doi.org/10.48550/arXiv.2209.07870

\bibitem {Li1}A. Li, L. Wang and D. Wu, Simultaneous robot-world and hand-eye calibration using dual-quaternions and Kronecker product. \textit{ Int. J. Phys. Sci.} 2010, 5: 1530-1536.

\bibitem {Yuan}S.F. Yuan, Q.W. Wang and Y.B. Yu, On Hermitian solutions of the split quaternion matrix equation  $AXB+CXD= E$. \textit{ Adv. Appl. Clifford Algebr.} 2017, 27 (4): 3235-3252.

\bibitem {Xie}L.M. Xie, Q.W. Wang, A system of matrix equations over the commutative quaternion ring. \textit{ Filomat.} 2023, 37 (1): 97-106.

\bibitem {Ren}B.Y. Ren, Q.W. Wang and X.Y. Chen, The $\eta$-anti-Hermitian solution to a system of constrained matrix equations over the generalized Segre quaternion algebra. \textit{ Symmetry.} 2023, 15, 592.

\bibitem {Chen}X.Y. Chen, Q.W. Wang, The $\eta$-(anti-)Hermitian solution to a constrained Sylvester-type generalized commutative quaternion matrix equation. \textit{Banach J. Math. Anal.} 2023, 17:40.

\bibitem {Qi2022}L.Q. Qi, C. Ling and H. Yan, Dual quaternions and dual quaternion vectors. \textit{Commun. Appl. Math. Comput}.  2022, 4: 1494-1508.


\bibitem {Mar1974}G. Marsaglia, G.P.H. Styan, Equalities and inequalities for ranks of matrices. \textit{Linear Multilinear Algebra}. 1974, 2: 269-292.

\bibitem{Khatri} C.G. Khatri, S.K. Mitra, Hermitian and nonnegative definite solutions of linear matrix equations. \textit{SIAM J. Appl. Math}. 1976, 31 (4): 579-585.

\bibitem {Mitra} S.K. Mitra, The matrix equations $AX = C, XB = D$. \textit{Linear Algebra Appl}. 1984, 59: 171-181.

\bibitem {Chang2010} H.X. Chang, Q.W. Wang and G.J. Song, $(R, S)$-conjugate solution to a pair of linear matrix equations. \textit{Appl. Math. Comput.} 2010, 217: 73-82.

\bibitem {Nie} X.R. Nie, Q.W. Wang and Y. Zhang, A system of matrix equations over the quaternion algebra with applications. \textit{Algebra Colloq}. 2017, 24 (2): 233-253.

\bibitem {Zhang2011} Q. Zhang, Q.W. Wang, The $(P,Q)$-(skew)symmetric extremal rank solutions to a system of quaternion matrix equations. \textit{Appl. Math. Comput}. 2011, 217 (22): 9286-9296.


\end{thebibliography}
\end{document}